\theoremstyle{plain}
\newtheorem{lemma}{Lemma}[section]
\newtheorem{theorem}[lemma]{Theorem}
\theoremstyle{definition}
\numberwithin{equation}{section}
\newcommand{\supp}{\text{\rm supp}}
\newcommand{\ve}{\varepsilon}
\newcommand{\enne}{\mathbb{N}}
\newcommand{\gammA}{\boldsymbol{\gamma}}
\newcommand{\G}{\mathcal{G}}
\renewcommand{\r}{\varrho}
\begin{document}
\title[Self-intersection]{Self-Intersection of Optimal geodesics}

\author{Fabio Cavalletti and Martin Huesmann}

\address{RWTH, Department of Mathematics, Templergraben  64, D-52062 Aachen (Germany)}
\email{cavalletti@instmath.rwth-aachen.de}

\address{Universit\"at Bonn, Institut f\"ur angewandte Mathematik, Endenicher Allee 60, D-53115 Bonn (Germany)}
\email{huesmann@iam.uni-bonn.de}

\thanks{MH gratefully acknowledges funding by SFB 611}

\bibliographystyle{plain}

\begin{abstract}
Let $(X,d,m)$ be a geodesic metric measure space. Consider a geodesic $\mu_{t}$ in the $L^{2}$-Wasserstein space.
Then as $s$ goes to $t$  the support of $\mu_{s}$ and the support of $\mu_{t}$ have to overlap, provided an upper bound on the densities holds.
We give a more precise formulation of this self-intersection property. We consider for each $t$ the set of times for which a geodesic belongs to the support of 
$\mu_{t}$ and we prove that $t$ is a point of Lebesgue density 1 for this set, in the integral sense.
Our result applies to spaces satisfying $\mathsf{CD}(K,\infty)$. The non branching property is not needed.
\end{abstract}

\maketitle

\section{Introduction}

Let $(X,d,m)$ be a complete and separable metric measure space with the additional property that 
\begin{itemize}
\item $X$ coincide with the support of $m$; 
\item $(X,d)$ is a geodesic space.  
\end{itemize}
It is then well-known that the associated $L^{2}$-Wasserstein space $(\mathcal{P}_{2}(X),W_{2})$  is geodesic as well:
so to any $\mu_{0},\mu_{1} \in \mathcal{P}_{2}(X)$ we can associate a geodesic $[0,1] \ni t \mapsto \mu_{t}$ joining 
$\mu_{0}$ to $\mu_{1}$. See \cite{villa:Oldnew} for an overview (and much more) on optimal transportation.

Under some general assumption on the metric measure space $(X,d,m)$, like $\mathsf{CD}(K,\infty)$, see \cite{villott:curv}, \cite{sturm:MGH1}, \cite{sturm:MGH2} for their definition, 
it is possible to prove that if $\mu_{0}$ and $\mu_{1}$ are both absolute continuous with respect to $m$ with bounded densities, 
then the same property holds for the density of $\mu_{t}$, in particular $\mu_{t} \ll m$ and its density 
in bounded uniformly in $t\in [0,1]$, see for instance \cite{rajala:bounded}.

Thanks to this uniform bound on the density, by means of standard arguments in measure theory, one can prove that 
the support of $\mu_{t}$ has to overlap with $\mu_{0}$ as $t$ goes to $0$, otherwise to much ``mass'' would be present inside
the support of $\mu_{0}$. The same property holds for another time $s$ different from $0$ as $t$ goes to $0$. 
This overlapping property is to our knowledge the only 
qualitative property of the support of $\mu_{t}$ that has been proved so far. 
Even in the Euclidean framework, examples have been constructed showing that even assuming for instance convexity of 
$\supp[\mu_{0}]$ and $\supp[\mu_{1}]$, then $\supp[\mu_{t}]$ is not convex and in general one can expect 
$\supp[\mu_{t}]$ to be ``hardly'' disconnected.

Our interest is to give a more careful analysis of this overlapping property and 
to prove a structure property of $\supp[\mu_{t}]$. 
If we denote with $\G(X)\subset C([0,1];X)$ the subset of geodesics in $(X,d)$ and with 
$\mathcal{P}(\G(X))$ the space of probability measures over it,
it is again well-known that  to each geodesic $t \mapsto \mu_{t} \in \mathcal{P}_{2}(X)$ it is possible to associate 
$\gammA \in \mathcal{P}(\G(X))$, so that  
\[
(e_{t})_{\sharp} \gammA  = \mu_{t}, \qquad e_{t}: C([0,1];X) \to X, \quad e_{t}(\gamma) : = \gamma_{t}.
\]
for all $t \in [0,1]$, with $e_{t}$ the evaluation map at time $t$.
Our result will be stated in terms of the support of $\gammA$. 
So denote with $G \subset \G(X)$ the support of $\gammA$. 

For each $t \in [0,1]$ consider the set 
\[
I_{t}(\gamma) : = \{ \tau \in [0,1] : \gamma_{\tau} \in e_{t}(G)\},
\]
hence $I_{t}(\gamma)$ is the set of times for which $\gamma$ remains inside the support of $e_{t}(G)$.
So clearly $t \in I_{t}(\gamma)$. We will prove that if there exists a positive constant $C$ so that 
\[
\mu_{\tau} = \r_{\tau} m, \quad \r_{\tau} \leq C, 
\]
for all $\tau$ in a neighborhood of $t \in (0,1)$, then $t$ is 
a point of Lebesgue density 1 for $I_{t}(\gamma)$ in the $L^{1}(\gammA)$-sense
that is
\[
\lim_{\ve \to 0}  \frac{\mathcal{L}^{1}\big(I_{t}(\gamma) \cap (t- \ve, t+ \ve ) \big)  }{2\ve} =1, \qquad \textrm{in  } L^{1}(G,\gammA). 
\]

\section{The Result}

So let $(X,d,m)$ be a metric measure space verifying the assumption stated before. 
Let $\mu_{0},\mu_{1} \in \mathcal{P}_{2}(X)$ and $t\mapsto \mu_{t}\in \mathcal{P}_{2}(X)$ a geodesic connecting them.
Moreover $\gammA \in \mathcal{P}(\G(X))$ is the dynamical optimal plan associated to $\mu_{t}$.

\begin{theorem}
Fix $t \in (0,1)$ and assume the existence of a positive constant $C$ and a neighborhood $U_{t} \subset [0,1]$ of $t$ such that 
$\mu_{\tau} = \r_{\tau}m$ with $\r_{\tau} \leq C$ for each $\tau \in U_{t}$.
Then 
\begin{equation}\label{E:density}
\lim_{\ve \to 0}  \frac{\mathcal{L}^{1}\big(I_{t}(\gamma) \cap (t- \ve, t+ \ve ) \big)  }{2\ve} =1, 
\end{equation}
in $L^{1}(G,\gammA)$. 
\end{theorem}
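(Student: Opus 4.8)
The plan is to convert \eqref{E:density} into a statement about the densities $\r_\tau$ for $\tau$ near $t$ by a Fubini argument, and then to extract it from a quantitative form of the overlapping property. Write $S:=e_t(G)$ and, for $\ve$ small enough that $(t-\ve,t+\ve)\subset U_t$, set $f_\ve(\gamma):=\mathcal{L}^1\big(I_t(\gamma)\cap(t-\ve,t+\ve)\big)/(2\ve)$. Since $\mathcal{L}^1\big(I_t(\gamma)\cap(t-\ve,t+\ve)\big)=\int_{t-\ve}^{t+\ve}\mathbf{1}\{\gamma_\tau\in S\}\,d\tau$ and $0\le f_\ve\le 1$, one has $1-f_\ve(\gamma)=\frac1{2\ve}\int_{t-\ve}^{t+\ve}\mathbf{1}\{\gamma_\tau\notin S\}\,d\tau\ge 0$. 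The integrand is jointly measurable — $(\gamma,\tau)\mapsto\gamma_\tau$ is continuous and $S$, the continuous image of the closed set $G$, is analytic hence universally measurable — so Tonelli together with $(e_\tau)_\sharp\gammA=\mu_\tau$ gives
\[
\|1-f_\ve\|_{L^1(G,\gammA)}=\int_G(1-f_\ve)\,d\gammA=\frac1{2\ve}\int_{t-\ve}^{t+\ve}\mu_\tau(X\setminus S)\,d\tau=:\frac1{2\ve}\int_{t-\ve}^{t+\ve}g(\tau)\,d\tau .
\]
Because $\gammA$ is concentrated on $G$, the measure $\mu_t=(e_t)_\sharp\gammA$ is concentrated on $S$ (by inner regularity), so $g(t)=0$. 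Hence it suffices to prove $g(\tau)\to 0$ as $\tau\to t$: then $g<\eta$ near $t$ for every $\eta>0$ and the average above is $\le\eta$ for $\ve$ small, which is \eqref{E:density}.

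To prove $g(\tau)\to 0$, I would first control how far $\mu_\tau$ sits from $S$. For $\gammA$-a.e.\ $\gamma$ one has $\gamma\in G$, so $\gamma_t\in S$ and, since $\gamma$ has constant speed, $d(\gamma_\tau,S)\le d(\gamma_\tau,\gamma_t)=|\tau-t|\,d(\gamma_0,\gamma_1)$. Integrating against $\gammA$ and using Cauchy--Schwarz with $\int d(\gamma_0,\gamma_1)^2\,d\gammA=W_2(\mu_0,\mu_1)^2$,
\[
\int_X d(x,S)\,d\mu_\tau(x)=\int_G d(\gamma_\tau,S)\,d\gammA(\gamma)\le|\tau-t|\,W_2(\mu_0,\mu_1)\xrightarrow[\tau\to t]{}0 .
\]
By Markov's inequality $\mu_\tau(\{d(\cdot,S)>\delta\})\le\delta^{-1}|\tau-t|\,W_2(\mu_0,\mu_1)\to 0$ for every fixed $\delta>0$. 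It remains to handle the thin shell $R_\delta:=\{x\notin S:\ d(x,S)\le\delta\}$, since $X\setminus S\subset\{d(\cdot,S)>\delta\}\cup R_\delta$.

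This is where the density bound enters. For any Borel $K$, $\mu_\tau(R_\delta)\le C\,m(R_\delta\cap K)+\mu_\tau(X\setminus K)$. The curve $\tau\mapsto\mu_\tau$ is $W_2$-continuous, so $\{\mu_\tau:\tau\in[t-\delta_0,t+\delta_0]\}$ (with $[t-\delta_0,t+\delta_0]\subset U_t$) is $W_2$-compact, hence tight; given $\eta>0$ choose a compact $K_\eta$ with $\mu_\tau(X\setminus K_\eta)<\eta$ for all such $\tau$. As $m$ is locally finite (true in any $\mathsf{CD}(K,\infty)$ space), $m(K_\eta)<\infty$, and as $\delta\downarrow 0$ the sets $R_\delta\cap K_\eta$ decrease to $(\overline S\setminus S)\cap K_\eta$ with $\overline S=\supp[\mu_t]$. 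Granting that this limit set is $m$-negligible — which holds as soon as $e_t(G)$ is closed, i.e.\ $e_t(G)=\supp[\mu_t]$ (the reading suggested by the abstract, in particular whenever $G$ is compact, e.g.\ $\supp\mu_0,\supp\mu_1$ bounded and $X$ proper) — continuity from above gives $m(R_\delta\cap K_\eta)\to 0$. Hence $\limsup_{\tau\to t}\mu_\tau(X\setminus S)\le C\,m(R_\delta\cap K_\eta)+\eta\to\eta$ as $\delta\to 0$, and letting $\eta\to 0$ yields $g(\tau)\to 0$, completing the argument.

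The soft parts are the Fubini reduction and the $W_2$-estimate on $\int d(\cdot,S)\,d\mu_\tau$. The real obstacle is the passage from $\mu_\tau(\{d(\cdot,S)>\delta\})\to 0$ to $\mu_\tau(X\setminus S)\to 0$: it forces one to use, at once, the bounded-density hypothesis $\r_\tau\le C$ (to dominate $\mu_\tau$ by $m$ on the shell $R_\delta$), the local finiteness of $m$ coming from $\mathsf{CD}(K,\infty)$ together with tightness of the Wasserstein geodesic (to localise to a set of finite $m$-mass), and the fact that $\supp[\mu_t]\setminus e_t(G)$ is $m$-null. Dealing cleanly with this last point — the discrepancy between $e_t(G)$ and its closure $\supp[\mu_t]$ — is the one place where genuine care is needed.
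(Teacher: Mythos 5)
Your proof is correct and takes a genuinely different route from the paper's. The paper argues by contradiction: assuming the Lebesgue density fails to be $1$ on a set $H$ with $\gammA(H)>0$, it uses Fubini and Fatou to extract times $t+s_{n}\to t$ and slices $E(t+s_{n})\subset G$ of uniformly positive $\gammA$-measure consisting of curves with $\gamma_{t+s_{n}}\notin e_{t}(G)$; the density bound forces $m(e_{t+s_{n}}(E(t+s_{n})))\geq c/C$, and since $e_{t+s_{n}}(G)$ is claimed to converge to $e_{t}(G)$ in Hausdorff distance these sets sit inside $e_{t}(G)^{\ve}\setminus e_{t}(G)$, contradicting $m(e_{t}(G))\geq\limsup_{\ve}m(e_{t}(G)^{\ve})$; a second step then upgrades subsequential a.e.\ convergence to $L^{1}$ convergence. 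You instead compute directly $\|1-f_{\ve}\|_{L^{1}(\gammA)}=\frac{1}{2\ve}\int_{t-\ve}^{t+\ve}\mu_{\tau}(X\setminus e_{t}(G))\,d\tau$ and show the integrand tends to $0$: the constant-speed estimate $d(\gamma_{\tau},e_{t}(G))\leq|\tau-t|\,d(\gamma_{0},\gamma_{1})$ plus Markov's inequality kills the part far from $e_{t}(G)$, and the density bound plus tightness and continuity from above kills the thin shell. Your approach buys three things: (i) it is direct and yields the $L^{1}$ statement in one pass, with no need for the paper's Step 2; (ii) the Markov-inequality substitute for Hausdorff convergence only needs $\int d(\gamma_{0},\gamma_{1})^{2}\,\gammA(d\gamma)<\infty$, whereas the paper's Hausdorff claim really requires $\sup_{\gamma\in G}d(\gamma_{0},\gamma_{1})<\infty$, which is not guaranteed; (iii) the localization to a compact $K_{\eta}$ of finite $m$-measure justifies the continuity-from-above step that the paper invokes without comment. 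The one hypothesis you rightly flag --- that $e_{t}(G)$ be closed, or at least that $\supp[\mu_{t}]\setminus e_{t}(G)$ be $m$-null --- is equally needed and equally unaddressed in the paper (its identification of $I_{t}(\gamma)^{c}$ with $\{\tau: d(\gamma_{\tau},e_{t}(G))>0\}$ presupposes it), so this is a shared gap rather than a defect of your argument.
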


\begin{proof}
Assume by contradiction  \eqref{E:density} doesn't hold. 
Therefore there exists a set $H \subset G$, with $\gammA(H)>0$ so that for all $\gamma \in H$ we have two possibilities: or
\[ 
0 \leq \liminf_{\ve \to 0}  \frac{\mathcal{L}^{1}\big(I_{t}(\gamma) \cap (t- \ve, t+ \ve ) \big)  }{2\ve}
< \limsup_{\ve \to 0}  \frac{\mathcal{L}^{1}\big(I_{t}(\gamma) \cap (t- \ve, t+ \ve ) \big)  }{2\ve}  \leq 1. 
\]
either the limit exists but is not one: 
\[
\lim_{\ve \to 0}  \frac{\mathcal{L}^{1}\big(I_{t}(\gamma) \cap (t- \ve, t+ \ve ) \big)  }{2\ve} <1.
\]
In the first part of the proof we show that in the first case, the $\limsup$ must be equal to $1$, neglecting a set of zero $\gammA$-measure. 
The same argument excludes immediately the second case.

{\it Step 1.} Suppose by contradiction the existence of a set $H \subset G$, with $\gammA(H) >0$ so that for all $\gamma \in H$
\[
\limsup_{\ve \to 0}  \frac{\mathcal{L}^{1}\big(I_{t}(\gamma) \cap (t- \ve, t+ \ve ) \big)  }{2\ve} <1.
\]
and therefore possibly restricting $H$,
\[
\liminf_{\ve \to 0}  \frac{\mathcal{L}^{1}\big(I^{c}_{t}(\gamma) \cap (t- \ve, t+ \ve ) \big)  }{2\ve} > \alpha,
\]
for some $\alpha >0$. Let
\[
E: = \{ (\gamma,s) \in H \times (0,1) : t+s \in I_{t}(\gamma)^{c} \}=\{ (\gamma,s) \in H \times (0,1) : d(\gamma_{t+s},e_{t}(G)) > 0 \}.
\]
Then by Fubini's Theorem
\[
\gammA \otimes \mathcal{L}^{1}(E) = \int_{(0,1)}  \mathcal{L}^{1}(E(\gamma))\gammA(d\gamma), \qquad E(\gamma):= P_{2}\Big( E \cap \big( \{\gamma\} \times (0,1) \big) \Big),
\]
where $P_{i}$ denotes the projection map on the $i$-th component, for $i = 1,2$. 
From Fato\'u's Lemma 
\[
\liminf_{\ve \to 0} \frac{\gammA \otimes \mathcal{L}^{1} \Big( E \cap \big(H \times (t - \ve, t+\ve)\big) \Big)}{2\ve} \geq \alpha \gammA(H),
\]
therefore
\[
\liminf_{\ve \to 0} \frac{1}{2\ve} \int_{(t-\ve,t+\ve)} \gammA(E(\tau)) \mathcal{L}^{1}(ds) \geq \alpha \gammA(H), \qquad 
E(\tau) : = P_{1}\Big( E \cap \big( H \times \{\tau\} \big) \Big).
\]
So there must be a sequence of $\{s_{n}\}_{n \in \enne}$ converging to $0$ so that $\gammA(E (t+ s_{n})) \geq C$, for some $C>0$.
Then, since $e_{t+s_{n}}(G)$ converges to $e_{t}(G)$ in Hausdorff topology as $s_{n}$ goes to 0, we have
\[
m( e_{t}(G)^{\ve}) \geq m( e_{t}(G) \cup  e_{t+ s_{n}}(E(t+s_{n}))) \geq m(e_{t}(G)) + m(e_{t+s_{n}}(E(t+s_{n}))),
\]
where $e_{t}(G)^{\ve} := \{z \in X: d(z,e_{t}(G))\leq\ve \}$.
Since by assumption $\r_{\tau} \leq C$ on $e_{\tau}(G)$ for all $\tau \in U_{t}$, it follows that $m(e_{t+s_{n}}(E(t+s_{n})))$ remains uniformly strictly positive as $s_{n}$ goes to $0$. 
Since 
\[
m(e_{t}(G)) \geq \limsup_{\ve \to 0}m(e_{t}(G)^{\ve}),
\]
we have a contradiction.  Hence we have shown that there exists $H$, $\gammA$-negligible, so that
\[
\limsup_{\ve \to 0}  \frac{\mathcal{L}^{1}\big(I_{t}(\gamma) \cap (t- \ve, t+ \ve ) \big)  }{2\ve} =1,
\]
for all $\gamma \in G\setminus H$. Using the same reasoning we can also prove a stronger statement: for any 
sequence $\ve_{n} \to 0$ there exists $H$, $\gammA$-negligible and depending on the sequence $\ve_{n}$, so that 
\[
\limsup_{n\to \infty}  \frac{\mathcal{L}^{1}\big(I_{t}(\gamma) \cap (t- \ve_{n}, t+ \ve_{n} ) \big)  }{2\ve_{n}} =1,
\]
for all $\gamma \in G\setminus H$.
We now show that $L^{1}$-convergence holds.

{\it Step 2.} Consider any sequence $\ve_{n}$ converging to $0$. Then from the equality 
\[
1 - \frac{\mathcal{L}^{1} (I_{t}^{c}(\gamma) \cap (t-\ve_{n}, t+\ve_{n})) }{2\ve_{n}} = 
\frac{\mathcal{L}^{1} (I_{t}(\gamma) \cap (t-\ve_{n}, t+\ve_{n})) }{2\ve_{n}},
\]
integrating over any set $K \subset G$ we get
\begin{align*}
\gammA(K) 
- &~ \limsup_{n\to \infty} \int_{K} \frac{\mathcal{L}^{1} (I_{t}^{c}(\gamma) \cap (t-\ve_{n}, t+\ve_{n})) }{2\ve_{n}} \gammA(d\gamma)\crcr
= &~ \liminf_{n\to \infty} \int_{K} \frac{\mathcal{L}^{1} (I_{t}(\gamma) \cap (t-\ve_{n}, t+\ve_{n})) }{2\ve_{n}} \gammA(d\gamma).
\end{align*}
Hence, if there exists $K\subset G$ with $\gammA(K)>0$ so that 
\[
\liminf_{n\to \infty} \int_{K} \frac{\mathcal{L}^{1} (I_{t}(\gamma) \cap (t-\ve_{n}, t+\ve_{n})) }{2\ve_{n}} \gammA(d\gamma) < \gammA(K),
\]
then
\[
\limsup_{n\to \infty} \int_{K} \frac{\mathcal{L}^{1} (I_{t}^{c}(\gamma) \cap (t-\ve_{n}, t+\ve_{n})) }{2\ve_{n}} \gammA(d\gamma)>0.
\]
Using Fubini's Theorem as in {\it Step 1.} we could find a sequence of slices of $E$, say $E(t+s_{n})$, with uniformly positive 
$\gammA$-measure and therefore we would get a contradiction. Reasoning in the same manner for the $\limsup$ we get that 
for any $K \subset G$, 
\[
\liminf_{n\to \infty} \int_{K} \frac{\mathcal{L}^{1} (I_{t}(\gamma) \cap (t-\ve_{n}, t+\ve_{n})) }{2\ve_{n}} \gammA(d\gamma) = \gammA(K).
\]
Since $\ve_{n}$ was arbitrarily chosen
\[
\lim_{\ve \to 0 }\frac{\mathcal{L}^{1} (I_{t}(\gamma) \cap (t-\ve, t+\ve)) }{2\ve} = 1, \qquad \textrm{in} \ L^{1}(\gammA\llcorner_{G}).
\]
and the claim follows. 
\end{proof}

\end{document}